\documentclass[11pt]{amsart}
\usepackage[english]{babel}
\usepackage{amsfonts,latexsym,amsthm,amssymb,graphicx}
\usepackage{mathabx}
\usepackage[all]{xy}
\usepackage[usenames]{color}
\usepackage{tikz}
\usetikzlibrary{shapes}
\usepackage{xypic}
\usetikzlibrary{decorations.pathreplacing}
\usepackage{amsmath}
\usepackage{graphicx}
\usepackage[enableskew]{youngtab}
\usepackage[utf8]{inputenc}
\usepackage[english]{babel}
\usepackage{tikz}
\usetikzlibrary{arrows}
\usepackage{float}
\usepackage{pgf}
\usepackage{subcaption}
\usepackage{pgfplots}
\usepackage{amscd}
\usepackage{tikz}
\usepackage{blkarray}
\usepgflibrary{decorations.shapes}
\usetikzlibrary{shapes.geometric, decorations.shapes,decorations.markings, shapes, fit, arrows, positioning, trees, mindmap, calc}
\usepackage[colorlinks=true, linkcolor=blue, citecolor=blue, urlcolor=blue]{hyperref}

\newcommand{\Z}{{\mathbb Z}}

\DeclareMathOperator{\QH}{QH}

\newcommand{\OG}{\mathrm{OG}}

\newcommand{\GCD}{\mathrm{GCD}}

%
%
\setlength{\textwidth}{6 in}
\setlength{\textheight}{8.75 in}
\setlength{\topmargin}{-0.25in}
\setlength{\oddsidemargin}{0.25in}
\setlength{\evensidemargin}{0.25in}

\CompileMatrices

\newtheorem{thm}{Theorem}[section]
\newtheorem{lemma}[thm]{Lemma}
\newtheorem{cor}[thm]{Corollary}
\newtheorem{prop}[thm]{Proposition}

{\theoremstyle{defn} \newtheorem{defn}[thm]{Definition}}

{\theoremstyle{remark} 
\newtheorem{example}[thm]{Example}}

\newenvironment{tightcenter}{%
  \setlength\topsep{0pt}
  \setlength\parskip{0pt}
  \begin{center}
}{%
  \end{center}
}


\begin{document}

\title{On the spectral properties of the quantum cohomology of odd quadrics}

\author{Ryan M. Shifler} 
\address{ Department of Mathematical Sciences, Henson Science Hall, Salisbury University, Salisbury MD 21801 USA}\email{rmshifler@salisbury.edu}

\author{Stephanie Warman} 
\address{ Department of Mathematical Sciences, Henson Science Hall, Salisbury University, Salisbury MD 21801 USA}\email{swarman2@gulls.salisbury.edu}

\subjclass[2010]{Primary 14N35; Secondary 15B48, 14N15, 14M15}

\begin{abstract}
Let $H^\bullet(\OG)$ be the quantum cohomology (specialized at $q=1$) of the $2n-1$ dimensional quadric $\OG$. We will calculate the characteristic polynomial of the linear operators induced by quantum multiplication in $H^\bullet(\OG)$ and the Frobenius-Perron dimension. We also check that Galkin's lower bound conjecture holds for $\OG$.
 \end{abstract}
\maketitle

\section{Introduction}\label{s:intro}  
Let $\OG:=\OG(1,2n+1)$ be the $2n-1$ dimensional quadric.  This is the parameterization of isotropic 1 dimensional subspaces of $\mathbb{C}^{2n+1}$, isotropic with respect to a symmetric non-degenerate symmetric form. The quantum cohomology ring $(\mathrm{QH}^*(\OG), \star)$ is a graded algebra over $\Z[q]$, where $q$ is the quantum parameter and $\deg q=2n-1$. The ring $(\mathrm{QH}^*(\OG), \star)$ has Schubert classes given by $\tau_p$, with $0 \leq p \leq 2n-1$. The ring multiplication is given by \[ \tau_p \star \tau_i = \sum_{j, d_0 \geq 0} c_{p, i}^{j,d_0}q^{d_0} \tau_j \] where $c_{p, i}^{j,d_0}$ is the Gromov-Witten invariant that enumerates degree $d_0$ rational curves intersecting general translate of $\tau_{p}, \tau_{i},$ and the Poincar\'e dual of $\tau_{j}$ which is $\tau_{2n-1-j}$.

Consider the specialization $H^\bullet(\OG):= \mathrm{QH}^*(\OG)_{|q=1}$ at $q=1$. The quantum multiplication by the Schubert class $\tau_p$, with $0 \leq p \leq 2n-1$, induces an endomorphism $A(\tau_p)$ of the finite-dimensional vector space $H^\bullet(\OG)$:
 \[ y\in H^\bullet(\OG) \mapsto A( \tau_p)(y):= (\tau_p\star y)|_{q=1} \/. \]
 
 Denote by $\delta_p:=\max\{|\delta|:\delta \mbox{ is an eigenvalue of } A(\tau_p)\}$ the real largest eigenvalue of $A(\tau_p)$. Define the Frobenius-Perron Dimension to be \[ \text{FPdim}:H^\bullet(\OG) \rightarrow \mathbb{C} \mbox{ defined by } \text{FPdim}(\tau_p)=\delta_p.\] The first study of Frobenius-Perron dimension on quantum cohomology specialized at $q=1$ is for the Grassmannian in \cite{Riet} with subsequent study in \cite{LSYZ}.
 
 To simplify notation we let $d:=\GCD(p,2n-1)$ for the remainder of the article. We will prove the next two results.
 
  \begin{thm} \label{main:thm}
The characteristic polynomial of $A(\tau_p)$ is

\[ \begin{cases} 
      \lambda\left(\lambda^{\frac{2n-1}{d}}-2^{\frac{2p}{d}}\right)^{d} & 1 \leq p <n \\
     \lambda\left(\lambda^{\frac{2n-1}{d}}- 2^{\frac{2p-(2n-1)}{d}}\right)^{d} & n \leq p <2n-1 \\
      (\lambda -1)^{2n-1}(\lambda+1) & p=2n-1
   \end{cases}.
\]
 In particular, the eigenvalues of $A(\tau_p)$ are simple if and only if the integers $p$ and $2n-1$ are relatively prime.
 \end{thm}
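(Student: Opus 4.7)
The plan is to reduce everything to the single matrix $A(\tau_1) = A(h)$ of quantum multiplication by the hyperplane class. Assuming $n \ge 2$ (the case $n=1$ is $\OG = \mathbb{P}^1$, handled directly by observing $A(h)^2 = I$), I will first record the quantum Chevalley formula: $h \star \tau_p = \tau_{p+1}$ for $p \notin \{n-1, 2n-2, 2n-1\}$, together with the three exceptions $h \star \tau_{n-1} = 2\tau_n$, $h \star \tau_{2n-2} = \tau_{2n-1} + q\tau_0$, and $h \star \tau_{2n-1} = qh$. Iterating these immediately produces the generating relation $h^{\star 2n} = 4qh$ in $\mathrm{QH}^*(\OG)$, which at $q = 1$ becomes the matrix identity $A(h)^{2n} = 4A(h)$. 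The polynomial $\lambda(\lambda^{2n-1} - 4)$ has degree $2n = \dim H^\bullet(\OG)$ and $2n$ distinct complex roots (namely $0$ and $4^{1/(2n-1)}\zeta$ for $\zeta^{2n-1}=1$), so it must be the characteristic polynomial of $A(h)$. This handles the case $p = 1$ and pins down the spectrum of $A(h)$.

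Next I will iterate the Chevalley rule at $q=1$ to write the other Schubert classes as polynomials in $h$, obtaining
\[
\tau_p = \begin{cases} h^{\star p}, & 1 \le p \le n-1, \\ h^{\star p}/2, & n \le p \le 2n-2, \\ h^{\star(2n-1)}/2 - 1, & p = 2n-1, \end{cases}
\]
where the last identity uses the quantum correction in $h \star \tau_{2n-2}$. Each $A(\tau_p)$ is therefore a polynomial in $A(h)$, and its spectrum is computed by substituting the eigenvalues of $A(h)$. For $1 \le p \le 2n-2$ the result is $0$ together with $c_p \cdot \zeta^p$, where $\zeta$ ranges over the $(2n-1)$st roots of unity and $c_p$ equals $4^{p/(2n-1)}$ or $4^{p/(2n-1)}/2$. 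Writing $d = \gcd(p, 2n-1)$, the map $\zeta \mapsto \zeta^p$ is $d$-to-$1$ onto the $((2n-1)/d)$th roots of unity, so the nonzero eigenvalues of $A(\tau_p)$ are the $((2n-1)/d)$th roots of $c_p^{(2n-1)/d}$, each of multiplicity $d$. Elementary simplification of $c_p^{(2n-1)/d}$ yields $2^{2p/d}$ or $2^{(2p-(2n-1))/d}$, matching the first two cases. For $p = 2n-1$ the identity $A(\tau_{2n-1}) = A(h)^{2n-1}/2 - I$ sends the simple eigenvalue $0$ of $A(h)$ to $-1$ and each of the remaining $2n-1$ eigenvalues (all satisfying $\lambda^{2n-1} = 4$) to $+1$, producing $(\lambda - 1)^{2n-1}(\lambda + 1)$. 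The simplicity assertion follows at once since $(\lambda^m - c)^d$ with $c \ne 0$ has $md$ roots but only $m$ distinct ones, so simplicity forces $d = 1$, equivalently $\gcd(p, 2n-1) = 1$.

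The main obstacle will be establishing the relation $h^{\star 2n} = 4qh$ and pinning down the coefficient of $q\tau_0$ in $h \star \tau_{2n-2}$. One can either quote the standard Siebert--Tian / Fulton--Pandharipande presentation of $\mathrm{QH}^*(\OG)$, or compute the relevant three-point Gromov--Witten invariants directly. Poincar\'e-dual symmetry of the structure constants forces $\langle h, \tau_{2n-2}, \tau_{2n-1}\rangle_1 = \langle h, \tau_{2n-1}, \tau_{2n-2}\rangle_1$, so once the overall coefficient $4$ in $h^{\star 2n}$ is known the two quantum corrections must both equal $1$, after which the remainder is routine bookkeeping of eigenvalues and their multiplicities.
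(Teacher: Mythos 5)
Your overall strategy is sound and genuinely different from the paper's: you reduce every $A(\tau_p)$ to a polynomial in $A(\tau_1)$ and then do the eigenvalue bookkeeping (the $d$-to-one map $\zeta\mapsto\zeta^p$ on $(2n-1)$-st roots of unity, the case $p=2n-1$, and simplicity iff $\gcd(p,2n-1)=1$), exactly as in the paper's unnumbered Proposition of Section 2, Lemma~\ref{eig:lemma}, and the final proof; your indexing of the Chevalley rule (the doubling at $\tau_1\star\tau_{n-1}=2\tau_n$) is the correct one and is the one the paper actually uses in Example~\ref{example QH(OG(1,2n+1))} and in that Proposition. The genuine gap is in how you pin down the spectrum of $A(\tau_1)$. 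From $h^{\star 2n}=4qh$ you get the matrix identity $A(\tau_1)^{2n}=4A(\tau_1)$, i.e.\ $\lambda^{2n}-4\lambda$ annihilates $A(\tau_1)$; but the inference ``this polynomial has degree $2n=\dim H^\bullet(\OG)$ and distinct roots, so it must be the characteristic polynomial'' is not valid. An annihilating polynomial only says the minimal polynomial divides it, hence that $A(\tau_1)$ is diagonalizable with eigenvalues among $\{0\}\cup\{4^{1/(2n-1)}\zeta:\zeta^{2n-1}=1\}$; it does not determine which of these roots occur, nor their multiplicities. (The $2\times 2$ identity matrix satisfies $x^2-1$, a polynomial of degree equal to the dimension with distinct roots, yet its characteristic polynomial is $(x-1)^2$.) Since every multiplicity claimed in the theorem traces back to the simplicity of the $2n$ eigenvalues of $A(\tau_1)$, this is precisely the point that has to be proved, and as written your argument does not prove it.

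The gap is easy to close with material you already wrote down: your identities $\tau_p=h^{\star p}$ for $p\le n-1$, $\tau_p=\tfrac12 h^{\star p}$ for $n\le p\le 2n-2$, and $\tau_{2n-1}=\tfrac12 h^{\star(2n-1)}-\tau_0$ at $q=1$ show that $A(\tau_1)^k\tau_0$, $0\le k\le 2n-1$, are (up to nonzero scalars, and up to adding $2\tau_0$ in the last one) the basis $\tau_0,\dots,\tau_{2n-1}$, hence linearly independent. So $\tau_0$ is a cyclic vector, the minimal polynomial of $A(\tau_1)$ has degree $2n$, therefore equals $\lambda^{2n}-4\lambda$, and for an operator with a cyclic vector this is also the characteristic polynomial. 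With that one additional sentence your proof is complete and is a coordinate-free alternative to the paper's route, which instead computes $\det(A(\tau_1)-\lambda I)=\lambda^{2n}-4\lambda$ by explicit row operations (Proposition~\ref{prop:charpol_tau1}) and even exhibits the eigenvectors to diagonalize $A(\tau_1)$ (Theorem~\ref{thm:eigenvector_tau1}); your version trades those explicit matrix computations for the ring relation $h^{\star 2n}=4qh$. The closing remark about fixing the two quantum corrections by symmetry of Gromov--Witten invariants is fine but unnecessary: quoting the quantum Chevalley formula, as the paper does from \cite{BKT,BKT2}, already supplies both coefficients.
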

The next corollary follows immediately from Theorem \ref{main:thm}.
 \begin{cor}
 We have that $\text{FPdim}(\tau_p)=
 \begin{cases}
 2^{\frac{2p}{2n-1}} & 1 \le p < n \\
 2^{\frac{2p}{2n-1}-1} & n \le p < 2n-1\\
 1 & p = 2n-1.
 \end{cases}
 $
 \end{cor}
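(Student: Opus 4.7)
The plan is to apply Theorem~\ref{main:thm} directly: by definition $\delta_p$ is the maximum of $|\lambda|$ over the eigenvalues of $A(\tau_p)$, and those eigenvalues can be listed once we factor the characteristic polynomial over $\C$. So in each of the three cases I would simply solve the explicit polynomial given in the theorem and take the largest modulus among its roots.

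In the range $1\le p<n$, the characteristic polynomial is $\lambda\bigl(\lambda^{(2n-1)/d}-2^{2p/d}\bigr)^{d}$, so the nonzero eigenvalues are exactly the $\tfrac{2n-1}{d}$-th roots of the positive real number $2^{2p/d}$. Each such root has absolute value $\bigl(2^{2p/d}\bigr)^{d/(2n-1)} = 2^{2p/(2n-1)}$, which is strictly larger than the remaining eigenvalue $0$, so $\delta_p = 2^{2p/(2n-1)}$. The case $n\le p<2n-1$ is handled identically: the nonzero eigenvalues share the common modulus $\bigl(2^{(2p-(2n-1))/d}\bigr)^{d/(2n-1)} = 2^{(2p-(2n-1))/(2n-1)}$, and the arithmetic identity $(2p-(2n-1))/(2n-1) = 2p/(2n-1)-1$ rewrites this in the form stated in the corollary.

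Finally, when $p=2n-1$ the characteristic polynomial factors as $(\lambda-1)^{2n-1}(\lambda+1)$, so the eigenvalues are $\pm 1$ and $\delta_{2n-1}=1$. I do not anticipate a real obstacle here: the corollary is a purely arithmetic consequence of the factorizations in Theorem~\ref{main:thm}. The only things worth double-checking are that the maximum of $|\lambda|$ is indeed achieved on the nonzero roots (automatic since $2>0$) and that taking $(2n-1)/d$-th roots of a positive real produces $d$ values all of the same modulus, which is immediate once one writes each root as a positive real times a root of unity.
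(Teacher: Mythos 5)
Your proposal is correct and is exactly the routine unpacking of Theorem~\ref{main:thm} that the paper intends when it writes that the corollary ``follows immediately'': read off the roots of each factored characteristic polynomial, note they all have the same modulus (a positive real raised to the $d/(2n-1)$ power), and observe that modulus exceeds $0$ (resp.\ equals $1$ in the $p=2n-1$ case), together with the arithmetic simplification $(2p-(2n-1))/(2n-1)=2p/(2n-1)-1$. Nothing to add.
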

 
 The final result will be a proof of a lower bound on the real largest eigenvalue of the first Chern class. We will state this precisely next. Let $K:=K_{\OG}$ be the canonical bundle of $\OG$ and let $c_1:=c_1(-K) \in H^2(\OG)$ be the anticanonical class. Galkin's lower bound conjecture for the $\OG$ case\footnote{Galkin's lower bound conjecture is for any fano variety.} states that (see \cite{Galkin}):
 \[ \text{FPdim}(A(c_1)) \geq \dim_{\mathbb{C}}\OG+1.\] Here, $\dim_{\mathbb{C}}\OG=2n-1$ and $c_1= (2n-1)\tau_1$.
 
 \begin{thm}
 Galkin's lower bound conjecture holds for $\OG$.
 \end{thm}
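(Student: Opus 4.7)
The plan is to apply Theorem~\ref{main:thm} with $p=1$ to compute $\text{FPdim}(A(c_1))$ explicitly, and then reduce Galkin's bound to a simple scalar inequality. Set $m := 2n-1 = \dim_{\mathbb{C}}\OG$ and assume $n \geq 2$, so that $1 \leq p = 1 < n$. Since $c_1 = m\,\tau_1$, we have $A(c_1) = m\,A(\tau_1)$, and hence $\text{FPdim}(A(c_1)) = m\cdot \text{FPdim}(A(\tau_1))$.

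By Theorem~\ref{main:thm} applied with $p = 1$ (note $d = \gcd(1, m) = 1$), the characteristic polynomial of $A(\tau_1)$ is $\lambda(\lambda^m - 4)$. Its eigenvalues are $0$ together with the $m$-th roots of $4$, and the one of largest modulus is the real root $2^{2/m}$. Therefore
\[ \text{FPdim}(A(c_1)) = m \cdot 2^{2/m}, \]
and Galkin's conjecture reduces to the scalar inequality $m \cdot 2^{2/m} \geq m + 1$, i.e.\
\[ 2^{2/m} \geq 1 + \frac{1}{m}. \]

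To close the argument, I would apply the elementary bound $e^x > 1 + x$ (valid for all $x > 0$) with $x = (2\ln 2)/m$:
\[ 2^{2/m} = \exp\!\left(\frac{2\ln 2}{m}\right) > 1 + \frac{2\ln 2}{m} > 1 + \frac{1}{m}, \]
where the last step uses $2\ln 2 = \ln 4 > 1$. This yields strict inequality for every $m \geq 1$ and settles Galkin's conjecture.

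The whole argument rests on Theorem~\ref{main:thm}, which fully determines the spectrum of $A(\tau_1)$; everything afterward is a one-variable estimate with constant-factor slack coming from the bound $\ln 4 > 1$. I do not anticipate any significant obstacle, and the real work has already been done in proving the characteristic polynomial formula.
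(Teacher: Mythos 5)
Your proof is correct and follows essentially the same route as the paper: both reduce Galkin's bound to the scalar inequality $4^{1/(2n-1)} \geq 1 + \tfrac{1}{2n-1}$ using $\text{FPdim}(A(c_1)) = (2n-1)\cdot 2^{2/(2n-1)}$, and then verify it by an elementary calculus estimate (the paper via monotonicity of $f(x)=4^x-x-1$, you via $e^x>1+x$ with $\ln 4>1$, which is the same idea).
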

 
 The result is proved for other varieties in \cite{ESSSW,LSYZ,CH,Ke}.
 
 {\em Acknowledgements.} Shifler is partially supported by the Building Research Excellence (BRE) Program at Salisbury University. The authors would like to thank the reviewer for carefully reading the manuscript and providing useful comments and recommendations.
 
 \section{Preliminaries}
 \label{prelim}
We will rigorously define $\OG$. The \textit{reverse dot product} of two vectors $\left( a_1, a_2 \ldots a_n \right)$ and $\left( b_1, b_2, \ldots b_n \right)$ is 
\begin{eqnarray}
\left( \left(  a_1, a_2 \ldots a_n \right),  \left( b_1, b_2, \ldots b_n \right) \right) =\sum_{i=1}^{n} a_ib_{n+1-i}. \label{revdot}
\end{eqnarray}
We are ready to define $\OG$. 

\begin{defn}\label{defn OG(1,2n+1)}
The odd dimensional quadric $\OG$ is  \[\OG = \left\{V \subset \mathbb{C}^{2n+1} : \dim V=1, \left( a, b \right) =0 \;\forall \;a,b \in V  \right\}.\]
Equivalently, $\OG$ is the loci of $v_{n+1}^2 + 2 \sum_{i=1}^n v_iv_{2n+2-i}$.
\end{defn}

\subsection{Schubert cells and varieties}
The Schubert cells and Schubert varieties are denoted by $X(p)^\circ$ and $X(p)$, respectively, with $0 \leq p \leq 2n-1$. We will first state what the Schubert cells are and follow with a description of the Schubert varieties. 

The Schubert cells are given as follows.
\begin{enumerate}
\item For $0 \leq p \leq n$, we have that \[X(p)^\circ=\left\{\mbox{span}\{(\overbrace{0,\dots,0}^{p},1,\overbrace{*,\dots,*}^{2n-1-2p},\bullet,\overbrace{*,\dots,*}^{p} )\}:* \mbox{ is free and $\bullet$ is given by (\ref{revdot})} \right\}.\]

\item For $n+1 \leq p \leq 2n-1$, we have that
\[ X(p)^\circ=\left\{\mbox{span}\{(\overbrace{0,\dots,0}^{p+1},1,\overbrace{*,\dots,*}^{2n-1-p} )\}:* \mbox{ is free } \right\}.\]
\end{enumerate}

The Schubert variety $X(p)$ is the closure of the Schubert cell $X(p)^\circ$. Precisely, the Schubert variety $X(p)$ is given by
 \[ X(p)=\bigcup_{i \geq p} X(i)^\circ.\] Furthermore, $\dim X(p)=\dim X(p)^\circ=(2n-1)-p.$
 
\subsection{Quantum cohomology}
 We will now introduce the quantum cohomology ring of $\OG$. The (small) quantum cohomology of $\OG$, denoted by $\mbox{QH}^*(\OG)$, has a graded $\mathbb{Z}[q]$-basis consisting of Schubert classes $\tau_p:=[X(p)]$ where $0 \leq p \leq 2n-1$ and $\deg q=2n-1$. The ring multiplication is given by \[ \tau_p \star \tau_i = \sum_{j, d_0 \geq 0} c_{p, i}^{j,d_0}q^{d_0} \tau_j \] where $c_{p, i}^{j,d_0}$ is the Gromov-Witten invariant that enumerates degree $d_0$ rational curves intersecting general translate of $\tau_{p}, \tau_{i},$ and the Poincar\'e dual of $\tau_{j}$ which is $\tau_{2n-1-j}$. We refer the reader to  \cite{FP,kontsevich.manin:GW:qc:enumgeom} for additional details regarding quantum cohomology.
 
The quantum Chevalley formula for $\OG$ is given as follows (see \cite{BKT,BKT2}).

\[ \tau_1 \star \tau_p=\begin{cases} 
      \tau_{p+1} & 1 \leq p \leq n-1 \\
      2\tau_{n+1} & p=n \\
      \tau_{p+1} & n+1 \leq p \leq 2n-3\\
      \tau_{2n-1}+q\tau_0 & p=2n-2\\
      q\tau_1 & p=2n-1.
   \end{cases}
\]

Consider the specialization $H^\bullet(\OG):= \mathrm{QH}^*(\OG)_{|q=1}$ at $q=1$. The quantum multiplication by $\tau_p$  induces an endomorphism $A(\tau_p)$ of the finite-dimensional vector space $H^\bullet(\OG)$:
 \[ y\in H^\bullet(\OG \mapsto A( \tau_p)(y):= (A(\tau_p)\star y)|_{q=1} \/. \]

\begin{example}\label{example QH(OG(1,2n+1))}
The matrix $A(\tau_1)$ for $\OG(1,3)$ is computed below.
\[ A(\tau_1) = \begin{blockarray}{ccccc}
& \tau_0 & \tau_1 &\tau_2 &\tau_3 \\
\begin{block}{c(cccc)}
\tau_0   &0       &0        &1     &0 \\
\tau_1   &1       &0        &0     & 1\\
\tau_2   &0       &2        &0     &0 \\
\tau_3   &0       &0        & 1   &0 \\
\end{block}
\end{blockarray}
.\]
\end{example}
The next proposition follows from immediately from the quantum Chevalley formula.
 \begin{prop} For $0 \leq p \leq 2n-1$, we have that
 \[ A(\tau_p)=\begin{cases} 
      A(\tau_1)^p & 1\leq p \leq n-1 \\
      \frac{1}{2}A(\tau_1)^p & n\leq p\leq 2n-2\\
     \frac{1}{2}A(\tau_1)^{2n-1}-A(\tau_0)& p=2n-1.
   \end{cases}
\]
Recall that $A(\tau_0)$ is the identity matrix.
\end{prop}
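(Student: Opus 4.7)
My plan is to iterate the quantum Chevalley formula to compute each power $\tau_1^{\star p}$ in $\QH^*(\OG)$, obtaining a scalar multiple of $\tau_p$ (plus a quantum correction at the top), and then invoke the fact that the operator assignment $A$ is a ring homomorphism at $q=1$, so that $A(\tau_1)^p = A(\tau_1^{\star p})|_{q=1}$.

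First I would run an induction on $p$. In the lower range, repeated application of the unambiguous Pieri case $\tau_1 \star \tau_i = \tau_{i+1}$ yields $\tau_1^{\star p} = \tau_p$, which gives $A(\tau_p) = A(\tau_1)^p$ directly. Once the middle Chevalley identity is applied, $\tau_1^{\star p}$ picks up a factor of $2$ which is then preserved under all further multiplication by $\tau_1$, because the Chevalley rule remains multiplicative in the second unambiguous range. Thus $\tau_1^{\star p} = 2\tau_p$ throughout the doubled range, which rearranges to $A(\tau_p) = \tfrac{1}{2} A(\tau_1)^p$.

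The top case $p = 2n-1$ is the only step that requires a quantum correction: applying $\tau_1$ once more to $2\tau_{2n-2}$ and using $\tau_1 \star \tau_{2n-2} = \tau_{2n-1} + q\tau_0$ yields $\tau_1^{\star(2n-1)} = 2\tau_{2n-1} + 2q\tau_0$. Specializing $q=1$ and solving for $A(\tau_{2n-1})$ produces the claimed identity $\tfrac{1}{2} A(\tau_1)^{2n-1} - A(\tau_0)$. There is no substantive obstacle; the argument is a routine induction, and the only care needed is to correctly track the single factor of $2$ introduced at the middle and the additive correction by $A(\tau_0)$ at the very top, both of which follow directly from the Chevalley formula once $q$ has been set to $1$.
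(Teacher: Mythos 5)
Your argument is correct and is essentially the paper's own (unstated) proof: the paper simply asserts that the proposition ``follows immediately from the quantum Chevalley formula,'' and your induction on $p$, using that $A$ sends $\star$-products at $q=1$ to operator products, is exactly that computation spelled out. One caution: for the halved range to begin at $p=n$ as the statement claims, the factor of $2$ must enter via $\tau_1\star\tau_{n-1}=2\tau_n$ (consistent with the paper's $3$-dimensional quadric example, where $\tau_1\star\tau_1=2\tau_2$), whereas the Chevalley formula as displayed in the preliminaries places the doubling at $\tau_1\star\tau_n=2\tau_{n+1}$, which would instead start the $\tfrac{1}{2}$-case at $p=n+1$; so make the location of the doubling explicit in your induction.
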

 \section{Spectral properties of $A( \tau_1)$}
 
 We will study the Spectral properties of $A( \tau_1)$ in this section.
 \begin{prop}\label{prop:charpol_tau1}
The characteristic polynomial for the linear operator $A(\tau_1)$ induced from the quantum multiplication in $\QH^*(\OG)$ is $\lambda^{2n}-4\lambda$. 
\end{prop}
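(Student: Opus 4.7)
The plan is to exploit the fact that the quantum product is commutative and associative, so that a single relation satisfied by $\tau_1^{\star k}$ in $\QH^*(\OG)|_{q=1}$ translates into a polynomial identity for the operator $A(\tau_1)$, and then to leverage the Schubert basis to pin down the characteristic polynomial uniquely.

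\textbf{Step 1 (iterated Chevalley).} I would iterate the quantum Chevalley formula to compute $\tau_1^{\star k}$ for $0 \le k \le 2n$. The outcome is
\[
\tau_1^{\star k} = \tau_k \text{ for } 0 \le k \le n, \quad \tau_1^{\star k} = 2\tau_k \text{ for } n+1 \le k \le 2n-2,
\]
\[
\tau_1^{\star(2n-1)} = 2\tau_{2n-1} + 2q\tau_0, \qquad \tau_1^{\star 2n} = 4q\tau_1.
\]
Specializing at $q=1$, the last identity gives $\tau_1^{\star 2n}|_{q=1} = 4\tau_1$.

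\textbf{Step 2 (operator identity).} Because the quantum product is commutative and associative, quantum multiplication by $\tau_1^{\star 2n}|_{q=1}$ equals $A(\tau_1)^{2n}$. Therefore
\[
A(\tau_1)^{2n} = 4\, A(\tau_1),
\]
so the minimal polynomial of $A(\tau_1)$ divides $\lambda^{2n} - 4\lambda = \lambda(\lambda^{2n-1}-4)$, a polynomial with $2n$ distinct roots.

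\textbf{Step 3 (cyclicity of $\tau_0$).} From the computation in Step 1, the vectors $\tau_0,\, A(\tau_1)\tau_0,\, A(\tau_1)^2\tau_0,\, \ldots,\, A(\tau_1)^{2n-1}\tau_0$ are
\[
\tau_0,\ \tau_1,\ \tau_2,\ \ldots,\ \tau_n,\ 2\tau_{n+1},\ \ldots,\ 2\tau_{2n-2},\ 2\tau_{2n-1}+2\tau_0.
\]
The change-of-basis matrix from $\{\tau_0,\tau_1,\ldots,\tau_{2n-1}\}$ to this set is lower triangular with nonzero diagonal entries, hence invertible. Thus $\tau_0$ is a cyclic vector for $A(\tau_1)$, which forces the minimal polynomial of $A(\tau_1)$ to have degree at least $2n$.

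\textbf{Step 4 (conclusion).} Combining Steps 2 and 3, the minimal polynomial equals $\lambda^{2n}-4\lambda$. As it has degree $2n = \dim H^\bullet(\OG)$ and divides the characteristic polynomial (also monic of degree $2n$), the two coincide, yielding $\det(\lambda I - A(\tau_1)) = \lambda^{2n} - 4\lambda$. The only real technical obstacle is the careful bookkeeping at the transition index $k=n$ (where the factor of $2$ first appears) and at $k=2n-1, 2n$ (where the quantum term $q\tau_0$ contributes and then multiplication by $\tau_1$ cycles back to $\tau_1$); everything else is purely formal manipulation.
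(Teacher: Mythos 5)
Your proof is correct, but it takes a genuinely different route from the paper. The paper computes the characteristic polynomial by brute force: after cyclically permuting the rows of $A(\tau_1)-\lambda I$ it obtains a nearly triangular matrix, and explicit row operations reduce it to an upper triangular matrix with diagonal $1,\dots,1,2,1,\dots,1,\tfrac{4\lambda-\lambda^{2n}}{2}$, so the determinant is $4\lambda-\lambda^{2n}$ up to the sign coming from the $2n-1$ row swaps. You instead exploit the ring structure of $H^\bullet(\OG)$: iterating Chevalley gives $\tau_1^{\star 2n}|_{q=1}=4\tau_1$, hence $A(\tau_1)^{2n}=4A(\tau_1)$ because $A$ is the regular representation of a commutative associative algebra, and the cyclicity of $\tau_0$ (the vectors $A(\tau_1)^k\tau_0$, $0\le k\le 2n-1$, form a triangular, hence invertible, system in the Schubert basis) forces the minimal polynomial to have degree $2n$, so minimal and characteristic polynomials coincide with $\lambda^{2n}-4\lambda$. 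Your argument is more conceptual and buys more: it shows in passing that $\tau_1$ generates $H^\bullet(\OG)\cong \C[x]/(x^{2n}-4x)$ and that $A(\tau_1)$ has simple spectrum, facts the paper only recovers afterwards via its explicit eigenvector computation (Theorem \ref{thm:eigenvector_tau1}); the paper's computation is more elementary and self-contained, needing nothing beyond the matrix itself. One bookkeeping remark: the Chevalley formula as printed in the paper places the doubling at $p=n$, while the paper's Example (and the standard convention) places it at $p=n-1$; your Step 1 follows the printed formula, but either placement yields the same relation $\tau_1^{\star 2n}|_{q=1}=4\tau_1$, since the two factors of $2$ encountered around the cycle multiply to $4$ in both cases, so your conclusion is unaffected.
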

\begin{proof}
The basis of this proof is to swap the rows around, moving the top row to the bottom and shifting the rest up by one. This will allow us to get an ``almost" diagonal matrix of the form: 
\begin{tightcenter}
$\begin{blockarray}{ccccccccc}
\begin{block}{c(cccccccc)}
r_1      &1       &  -\lambda&           &        &         &         &          & 1\\
r_{2+i}  &        & \ddots   & \ddots    &        &         &         &          & \\
r_{n}    &        &          & 1         &-\lambda&         &         &          & \\
r_{n+1}  &        &          &           & 2      & -\lambda&         &          & \\
r_{n+2}  &        &          &           &        &   1     &-\lambda &          & \\   
r_{n+2+j}&        &          &           &        &         &  \ddots & \ddots   & \\   
r_{2n-1} &        &          &           &        &         &         & 1        & -\lambda\\    
r_{2n}   &-\lambda&          &           &        &         &         & 1        &  0\\
\end{block}
\end{blockarray}
$
\end{tightcenter}
We can reduce this matrix to a diagonal matrix by subtracting a factor times each row from the last row. We will use the property that if two rows (or two columns) of a matrix are interchanged, the value of the determinant changes sign.
With the appropriate row operations we can get a matrix of the form: 

\begin{tightcenter}
$\begin{blockarray}{ccccccccc}
\begin{block}{c(cccccccc)}
r_1      &1       &  -\lambda&           &            &         &         &          & 1\\
r_{2+i}  &        & \ddots   & \ddots    &            &         &         &          & \\
r_{n}    &        &          & 1         &-\lambda    &         &         &          & \\
r_{n+1}  &        &          &           & 2          & -\lambda&         &          & \\
r_{n+2}  &        &          &           &            &   1     &-\lambda &          & \\   
r_{n+2+j}&        &          &           &            &         &  \ddots & \ddots   & \\   
r_{2n-1} &        &          &           &            &         &         & 1        & -\lambda\\    
r_{2n}   &        &          &           &            &         &         &          &  \frac{4\lambda-\lambda^{2n}}{2}\\
\end{block} 
\end{blockarray}
$
\end{tightcenter}
This is an upper triangular matrix so its determinant is the product of its diagonal given by $1 \cdot 1 \cdot \ldots \cdot 1 \cdot 2 \cdot 1 \cdot \ldots \cdot 1 \cdot \frac{4\lambda - \lambda^{2n}}{2} = 4\lambda - \lambda^{2n}$. Finally since we swapped around an odd number of rows ($2n-1$ swaps were made) we need to multiply this determinant by a factor of $-1$ to get the determinant of $\tau_1 - \lambda I $ to be $\lambda^{2n} - 4\lambda$. Therefore the characteristic polynomial of $\tau_1$ is  $\lambda^{2n} - 4\lambda$.
\end{proof}
Recall that the eigenvalues of a matrix are the roots of its characteristic polynomial so we can factor the characteristic polynomial to get eigenvalues of the form \[\lambda \in \left\{\sqrt[2n-1]{4}e^{\frac{2 \pi j i}{2n-1}}: 0 \leq j \leq 2n-1\right\} \cup \{0\}.\] 

The next Corollary follows immediately from Proposition \ref{prop:charpol_tau1}.
\begin{cor} \label{lemma:util}If $\lambda$ is an eigenvalue of the linear operator $\tau_1$ induced from the quantum multiplication in $\QH^*(\OG)$ then
$\frac{\lambda^{2n-1}-2}{2} = \begin{cases} 
      -1 & \lambda = 0 \\
       1 & \text{otherwise} 
   \end{cases}$.
\end{cor}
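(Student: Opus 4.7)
The plan is to read off the claim directly from the factorization of the characteristic polynomial provided by Proposition \ref{prop:charpol_tau1}. Since eigenvalues of $A(\tau_1)$ are precisely the roots of $\lambda^{2n}-4\lambda$, and
\[
\lambda^{2n}-4\lambda = \lambda\bigl(\lambda^{2n-1}-4\bigr),
\]
every eigenvalue $\lambda$ satisfies either $\lambda=0$ or $\lambda^{2n-1}=4$.

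First, I would substitute $\lambda=0$ into the expression $\frac{\lambda^{2n-1}-2}{2}$ to obtain $\frac{0-2}{2}=-1$, matching the first branch of the piecewise definition. Next, for any nonzero eigenvalue, the factorization forces $\lambda^{2n-1}=4$, so
\[
\frac{\lambda^{2n-1}-2}{2}=\frac{4-2}{2}=1,
\]
matching the second branch.

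Since the two cases $\lambda=0$ and $\lambda\neq 0$ are exhaustive among eigenvalues and both yield the stated value, the corollary follows. There is no real obstacle here; the corollary is a direct evaluation, and this is why it is stated as ``follows immediately.'' The only thing worth being careful about is ensuring the dichotomy truly partitions the eigenvalues, which it does because $\lambda=0$ is a simple root of $\lambda^{2n}-4\lambda$ distinct from the nonzero roots of $\lambda^{2n-1}-4$ (whose absolute value is $\sqrt[2n-1]{4}\neq 0$).
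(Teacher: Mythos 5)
Your proposal is correct and matches the paper's intent exactly: the corollary is stated as an immediate consequence of Proposition \ref{prop:charpol_tau1}, and your factorization $\lambda^{2n}-4\lambda=\lambda(\lambda^{2n-1}-4)$ followed by evaluating the two cases is precisely that argument. Nothing is missing.
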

We can use this corollary to prove the following theorem.
\begin{thm} \label{thm:eigenvector_tau1} The eigenvector of $A(\tau_1)$ associated to the eigenvalue $ \lambda_j=\sqrt[2n-1]{4}e^{\frac{2 \pi j i}{2n-1}}$, for some $1 \leq j \leq 2n-1$, or $\lambda_0=0$ is given as follows with respect to the basis $\{\tau_i\}_{0 \leq i \leq 2n-1}$.
$$\vec{v}_j=\left( \frac{\lambda_j^{2n-1}-2}{2},\frac{\lambda_j^{2n-2}}{2},\frac{\lambda_j^{2n-3}}{2}, \dots, \frac{\lambda_j^n}{2}, \lambda_j^{n-1}, \lambda_j^{n-2}, \dots, \lambda_j, 1 \right).$$
Finally, we have that $A(\tau_1)=PDP^{-1}$ where $P=[ \vec{v}_0^t \hspace{5pt} \vec{v}_1^t \cdots \vec{v}_{2n-1}^t]$ and $D=\mbox{diag}(\lambda_0,\lambda_1, \cdots, \lambda_{2n-1})$.

\end{thm}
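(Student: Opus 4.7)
The plan is to verify directly that the displayed vector $\vec{v}_j$ satisfies $A(\tau_1)\vec{v}_j = \lambda_j\vec{v}_j$ by decomposing the eigenvector equation into its $2n$ scalar components using the quantum Chevalley formula, and then checking each component. First I would read off the rows of $A(\tau_1)$ from the Chevalley formula: the relation $\tau_1 \star \tau_{2n-2} = \tau_{2n-1} + q\tau_0$ puts the coefficient of $\tau_{2n-2}$ into row $0$; the relations $\tau_1 \star \tau_0 = \tau_1$ and $\tau_1 \star \tau_{2n-1} = q\tau_1$ put the sum of the coefficients of $\tau_0$ and $\tau_{2n-1}$ into row $1$; rows $2 \le i \le n$ and $n+2 \le i \le 2n-1$ each pick up the coefficient of $\tau_{i-1}$; and row $n+1$ picks up twice the coefficient of $\tau_n$, because of the exceptional factor in $\tau_1 \star \tau_n = 2\tau_{n+1}$.

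This translates the eigenvector equation into the system
\begin{align*}
v_{2n-2} &= \lambda v_0, \\
v_0 + v_{2n-1} &= \lambda v_1, \\
v_{i-1} &= \lambda v_i \quad (2 \le i \le n), \\
2 v_n &= \lambda v_{n+1}, \\
v_{i-1} &= \lambda v_i \quad (n+2 \le i \le 2n-1).
\end{align*}
Next I would substitute the components of $\vec{v}_j$ into each equation and verify it. The two ``wraparound'' equations at rows $0$ and $1$ require the identity $\lambda^{2n-1}=4$ supplied by Corollary~\ref{lemma:util}; the factor of $2$ at row $n+1$ is absorbed by the change in normalization occurring between the halved and unhalved halves of $\vec{v}_j$; and the remaining equations are geometric-sequence identities of the form $\lambda^{k} = \lambda \cdot \lambda^{k-1}$. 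The edge case $\lambda_0 = 0$ reduces the formula to $\vec{v}_0 = (-1, 0, \ldots, 0, 1)$, which visibly lies in the kernel of $A(\tau_1)$ since $A(\tau_1)\tau_0 = \tau_1 = A(\tau_1)\tau_{2n-1}$.

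Finally, the diagonalization $A(\tau_1) = PDP^{-1}$ follows because the $2n$ eigenvalues $\{0\} \cup \{\sqrt[2n-1]{4}\, e^{2\pi i j/(2n-1)} : 1 \le j \le 2n-1\}$ are pairwise distinct, so the eigenvectors $\vec{v}_0, \ldots, \vec{v}_{2n-1}$ are automatically linearly independent and $P$ is invertible by construction. The main obstacle I expect is the bookkeeping at the transition near index $i = n$: one must align the factor of $2$ from the Chevalley formula with the change in normalization between the $\tfrac{1}{2}$-scaled half of $\vec{v}_j$ and the unscaled half, and apply the identity $\lambda^{2n-1} = 4$ at exactly the right moments to close up the two ``wraparound'' relations. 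Beyond that index juggling the verification is purely mechanical.
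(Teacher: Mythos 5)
Your overall strategy — read off $A(\tau_1)$ from the Chevalley formula, split $A(\tau_1)\vec{v}_j=\lambda_j\vec{v}_j$ into scalar equations, check each one, and then invoke distinctness of the $2n$ eigenvalues to conclude $P$ is invertible — is the same strategy as the paper. But there is a genuine gap at exactly the point you identified as the delicate spot: the transition near index $n$.

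You took the relation $\tau_1\star\tau_n=2\tau_{n+1}$ at face value, which places the exceptional $2$ at row $n+1$, column $n$, and leads to the two equations $v_{n-1}=\lambda v_n$ and $2v_n=\lambda v_{n+1}$. With the claimed components $v_{n-1}=\lambda^n/2$, $v_n=\lambda^{n-1}$, $v_{n+1}=\lambda^{n-2}$, these read $\tfrac{\lambda^n}{2}=\lambda^n$ and $2\lambda^{n-1}=\lambda^{n-1}$, both false for $\lambda\ne 0$. The assertion that ``the factor of $2$ at row $n+1$ is absorbed by the change in normalization'' is not correct: the $\tfrac12$-normalization in $\vec v_j$ changes between indices $n-1$ and $n$, while your factor of $2$ sits between indices $n$ and $n+1$ — off by one. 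If you had actually carried out the substitution you proposed, you would have seen both equations fail.

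The fix is that the exceptional Chevalley relation for the odd quadric is $\tau_1\star\tau_{n-1}=2\tau_n$ (this is $h\cdot\sigma_{n-1}=h^n=2\sigma_n$; note the paper's own Example, with $A(\tau_1)$ having the $2$ in position $(A)_{n,n-1}$, uses this version, and it is the standard formula — the line $\tau_1\star\tau_n=2\tau_{n+1}$ in the paper's displayed Chevalley formula is a typo). With the $2$ at row $n$, column $n-1$, the two transition equations become $2v_{n-1}=\lambda v_n$ and $v_n=\lambda v_{n+1}$, i.e. $\lambda^n=\lambda^n$ and $\lambda^{n-1}=\lambda^{n-1}$, which close up correctly; the $2$ then sits precisely at the index where the normalization of $\vec v_j$ changes, which is the alignment your heuristic was implicitly relying on. Everything else in your argument — the two wraparound rows using $\lambda^{2n-1}=4$ from Corollary~\ref{lemma:util}, the kernel vector $(-1,0,\dots,0,1)$ for $\lambda_0=0$, and invertibility of $P$ from the $2n$ distinct eigenvalues — is fine.
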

\begin{proof}
First observe that we have the following equality where the vectors on the right are written with respect to the basis $\{ \tau_i \}_{0 \leq i \leq 2n-1}$.
\begin{eqnarray*}
\tau_0+\tau_{2n-1}+\frac{1}{2}\sum_{k=1}^{n-1} \lambda_j^k \tau_{2n-1-k}+\sum_{k=n}^{2n-2} \lambda_j^k \tau_{2n-1-k}&=& \left(1, \frac{\lambda_j^{2n-2}}{2},\frac{\lambda_j^{2n-3}}{2}, \dots, \frac{\lambda_j^n}{2}, \lambda_j^{n-1}, \lambda_j^{n-2}, \dots, \lambda_j, 1 \right);\\
\tau_{2n-1}-\tau_0 &=& \left(-1, 0,0, \ldots, 0,0, 1 \right).
\end{eqnarray*}
Using Corollary \ref{lemma:util} we can write  
$$\vec{v}_j=\left( \frac{\lambda_j^{2n-1}-2}{2},\frac{\lambda_j^{2n-2}}{2},\frac{\lambda_j^{2n-3}}{2}, \dots, \frac{\lambda_j^n}{2}, \lambda_j^{n-1}, \lambda_j^{n-2}, \dots, \lambda_j, 1 \right).$$
To prove that our vector $\vec{v}_j^t$ is an eigenvector of $A(\tau_1)$ with corresponding eigenvalue $\lambda_j$ we need to multiply $A(\tau_1)$ by $\vec{v}_j^t$. To multiply the matrix by the vector we can take the dot product of each row of $A(\tau_1)$ with the vector $\vec{v}_j^t$. We index the rows of $A(\tau_1)$ by $r_i$ using the same conventions as those in Proposition \ref{prop:charpol_tau1}.

A direct calculation show that $r_1 \cdot \vec{v}_j^t=\lambda_j \frac{\lambda_j^{2n-1}-2}{2}$ and $r_i \cdot \vec{v}_j^t=\lambda_j \cdot \frac{\lambda_j^{2n-i}}{2}$ for $2 \leq i \leq 2n$. Thus, $A(\tau_1) \vec{v}_j^t=\lambda_j\vec{v}_j^t.$ The result follows.
\end{proof}


  \section{Spectural properties of $A( \tau_p)$}
  
 We will study the Spectral properties of $A( \tau_p)$ in this section. Recall that
 
 \[ A(\tau_p)=\begin{cases} 
      A(\tau_1)^p & 1\leq p \leq n-1 \\
      \frac{1}{2}A(\tau_1)^p & n\leq p\leq 2n-2\\
      \frac{1}{2}A(\tau_1)^{2n-1}-A(\tau_0)& p=2n-1.
   \end{cases}
\]

\begin{lemma} \label{eig:lemma} We have the following results.
\begin{enumerate}
\item The set of matrices $A(\tau_p)$, with $0 \leq p \leq 2n-1$, are simultaneously diagonalizable. 
\item For $1 \leq p \leq n-1$, the eigenvalues of $A(\tau_p)$ are $\displaystyle 0,\lambda^p_1, \lambda^p_2,\cdots, \lambda^p_{\frac{2n-1}{d}}$ where each nonzero eigenvalue has multiplicity $d$. 
\item For $n \leq p \leq 2n-2$, the eigenvalues of $A(\tau_p)$ are $\displaystyle 0,\frac{1}{2}\lambda^p_1, \frac{1}{2}\lambda^p_2,\cdots, \frac{1}{2}\lambda^p_{\frac{2n-1}{d}}$ where each nonzero eigenvalue has multiplicity $d$. 
\item The eigenvalues of $A(\tau_{2n-1})$ are 1, with multiplicity $2n-1$, and $-1$, with multiplicity 1.
\end{enumerate}
\end{lemma}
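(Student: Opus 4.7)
The plan is to reduce everything to the spectral decomposition of $A(\tau_1)$ obtained in Theorem \ref{thm:eigenvector_tau1} and then use the fact, recalled at the top of this section, that each $A(\tau_p)$ is an explicit polynomial in $A(\tau_1)$.

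First I would establish (1). By Theorem \ref{thm:eigenvector_tau1}, $A(\tau_1) = PDP^{-1}$ with $D = \mathrm{diag}(\lambda_0, \lambda_1, \ldots, \lambda_{2n-1})$. Since
\[
A(\tau_p) = \begin{cases} A(\tau_1)^p & 1 \le p \le n-1, \\ \tfrac{1}{2} A(\tau_1)^p & n \le p \le 2n-2, \\ \tfrac{1}{2} A(\tau_1)^{2n-1} - I & p = 2n-1, \end{cases}
\]
each $A(\tau_p)$ is a polynomial in $A(\tau_1)$, so conjugating by $P$ diagonalizes them all at once.

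For (2), I would compute directly: the eigenvalues of $A(\tau_1)^p$ are $\lambda_j^p$ for $0 \le j \le 2n-1$, giving $0$ (from $\lambda_0 = 0$) together with $\{4^{p/(2n-1)} e^{2\pi i jp/(2n-1)} : 1 \le j \le 2n-1\}$. The main small obstacle is the multiplicity count. Writing $d = \gcd(p, 2n-1)$, one has $\lambda_j^p = \lambda_k^p$ iff $(j-k)p \equiv 0 \pmod{2n-1}$ iff $j \equiv k \pmod{(2n-1)/d}$. Hence as $j$ ranges over $\{1, \ldots, 2n-1\}$ the value $\lambda_j^p$ takes exactly $(2n-1)/d$ distinct values, each $d$ times. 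This yields the list $0, \lambda_1^p, \lambda_2^p, \ldots, \lambda_{(2n-1)/d}^p$ as claimed. Part (3) is immediate from (2) after multiplying by $1/2$, since $A(\tau_p) = \tfrac12 A(\tau_1)^p$ for $n \le p \le 2n-2$.

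For (4), I would use $A(\tau_{2n-1}) = \tfrac12 A(\tau_1)^{2n-1} - I$ together with the fact that the nonzero eigenvalues $\lambda_j$ of $A(\tau_1)$ satisfy $\lambda_j^{2n-1} = 4$, while $\lambda_0^{2n-1} = 0$. The eigenvalues of $A(\tau_{2n-1})$ are therefore $\tfrac12 \cdot 4 - 1 = 1$ (occurring $2n-1$ times, once for each nonzero $\lambda_j$) and $\tfrac12 \cdot 0 - 1 = -1$ (occurring once, from $\lambda_0 = 0$), completing the lemma.
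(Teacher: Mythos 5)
Your proposal is correct and follows essentially the same route as the paper: diagonalize via the $P$ from Theorem \ref{thm:eigenvector_tau1}, note that each $A(\tau_p)$ is an explicit polynomial in $A(\tau_1)$, and read the eigenvalues off the diagonal. The only difference is that you spell out the multiplicity count via $\gcd(p,2n-1)$, which the paper treats as an ``obvious fact''; that added detail is fine and consistent with the paper's argument.
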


\begin{proof}
First observe that that $A(\tau_0)=I$ is the identity matrix. The result follows from the following obvious facts. 
\begin{enumerate}
    \item $A(\tau_p)=A(\tau_1)^p=PD^pP^{-1}$ for $1 \leq p \leq n-1$;
    \item $A(\tau_p)=\frac{1}{2}A(\tau_1)^p=\frac{1}{2}PD^pP^{-1}$ for $n\leq p \leq 2n-2$;
    \item and for $p=2n-1$ we have that \[A(\tau_{2n-1})=\frac{1}{2}PD^{2n-1}P^{-1}-PIP^{-1}=P\left(\frac{1}{2}D^{2n-1}-I\right)P^{-1}.\]
\end{enumerate} \end{proof}



 
   
 We will next present the proof of Theorem \ref{main:thm}.
\begin{proof}
Clearly by Lemma \ref{eig:lemma}, we know that the charactistic polynomial of $A(\tau_{2n-1})$ is $(\lambda-1)^{2n-1}(\lambda+1)$.

By Lemma \ref{eig:lemma}, we have that the eigenvalues of $A(\tau_p)$, for $p<n$, are
$ 4^{\frac{p}{2n-1}}  \cdot \left(e^{2\pi i\frac{ p}{2n-1}}\right)^j$ (for $0 \leq j \leq 2n-1$) with each having multiplicity $\mbox{GCD}(p,2n-1)$. Similarly, for $n \leq p <2n-1$, the eigenvalues of $A(\tau_p)$ are $4^{\frac{p}{2n-1}-1}  \cdot \left(e^{2\pi i\frac{ p}{2n-1}}\right)^j$ (for $0 \leq j \leq 2n-1$) with each having multiplicity $\mbox{GCD}(p,2n-1)$.

The characteristic polynomial of $A(\tau_p)$, for $p<n$, is
\begin{eqnarray*}
\lambda \prod_{k=0}^{d-1}\prod_{j=0}^{\frac{2n-1}{d}-1}\left(\lambda - 4^{\frac{p}{2n-1}}\left(e^{\frac{2\pi i j}{2n-1}}\right) \right)
&=& \lambda \prod_{k=0}^{d-1}\left(\lambda^{\frac{2n-1}{d}}-4^{\frac{p}{2n-1}\cdot \frac{2n-1}{d}}\right)\\
&=&\lambda \left(\lambda^{\frac{2n-1}{d}}-4^{\frac{p}{d}}\right)^{d}\\
&=&\lambda \left(\lambda^{\frac{2n-1}{d}}-2^{\frac{2p}{d}}\right)^{d}
\end{eqnarray*}

The characteristic polynomial of $A(\tau_p)$, for $n \leq p<2n-1$, is
\begin{eqnarray*}
&&\lambda \prod_{k=0}^{d-1}\prod_{j=0}^{\frac{2n-1}{d}-1}\left(\lambda - \frac{1}{2}\cdot 4^{\frac{p}{2n-1}}\left(e^{\frac{2\pi i j}{2n-1}}\right)\right)\\
&=& \lambda \prod_{k=0}^{d-1}\left(\lambda^{\frac{2n-1}{d}}-\left(\frac{1}{2}\cdot 4^{\frac{p}{2n-1}}\right)^{\frac{2n-1}{d}}\right)\\
&=&\lambda \prod_{k=0}^{d-1}\left(\lambda^{\frac{2n-1}{d}}-\left(\frac{1}{2}\right)^{\frac{2n-1}{d}}\cdot \left(4^{\frac{p}{d}}\right)\right)\\
&=&\lambda \prod_{k=0}^{d-1}\left(\lambda^{\frac{2n-1}{d}}-\left(4^{-\frac{2n-1}{2d}}\right)\cdot \left(4^{\frac{p}{d}}\right)\right)\\
&=&\lambda \prod_{k=0}^{d-1}\left(\lambda^{\frac{2n-1}{d}}-\left(2^{-\frac{2n-1}{d}}\right)\cdot \left(2^{\frac{2p}{d}}\right)\right)\\
&=&\lambda \prod_{k=0}^{d-1}\left(\lambda^{\frac{2n-1}{d}}-\left(2^{\frac{2p-(2n-1)}{d}}\right)\right)\\
&=&\lambda \left(\lambda^{\frac{2n-1}{d}}-2^{\frac{2p-(2n-1)}{d}}\right)^{d}
\end{eqnarray*}
The result follows.\end{proof}

 \section{Galkin's lower bound conjecture for $\OG$}
 
Lastly we check that Galkin's Lower Bound Conjecture holds for $H^\bullet(\OG)$.
\begin{thm} \label{prop:galkin}
Galkin's Lower Bound Conjecture holds for $H^\bullet(\OG)$. That is, \[ \text{FPdim} ((2n-1)\tau_1) \geq \dim \OG +1.\]
\end{thm}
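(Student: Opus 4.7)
The plan is to reduce the conjecture to an elementary inequality about the sequence $(1+1/k)^k$. By Proposition \ref{prop:charpol_tau1}, the eigenvalues of $A(\tau_1)$ are $0$ together with $\sqrt[2n-1]{4}\,e^{2\pi i j/(2n-1)}$ for $0 \le j \le 2n-2$, so the Frobenius--Perron eigenvalue of $A(\tau_1)$ equals $4^{1/(2n-1)}$. Since $A((2n-1)\tau_1) = (2n-1)\,A(\tau_1)$ (the factor $2n-1$ is just a scalar in $H^\bullet(\OG)$), the largest eigenvalue in absolute value of $A((2n-1)\tau_1)$ is
\[
\text{FPdim}((2n-1)\tau_1) \;=\; (2n-1)\cdot 4^{\frac{1}{2n-1}}.
\]

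First I would record this computation, so that the conjecture reduces to showing
\[
(2n-1)\cdot 4^{\frac{1}{2n-1}} \;\ge\; 2n.
\]
Setting $k := 2n-1 \ge 1$, this is equivalent to $k\cdot 4^{1/k} \ge k+1$, or equivalently
\[
4 \;\ge\; \left(1+\tfrac{1}{k}\right)^{k}.
\]

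Next I would invoke the classical fact that the sequence $\bigl(1+\tfrac{1}{k}\bigr)^{k}$ is strictly increasing in $k$ and converges to $e < 4$. This immediately gives $\bigl(1+\tfrac{1}{k}\bigr)^{k} < e < 4$ for all positive integers $k$, settling the inequality and hence the theorem. For completeness, one could also verify the base case $k=1$ (where $(1+1)^1 = 2 < 4$) by hand, and note that the bound is never tight, so Galkin's inequality is in fact strict for $\OG$.

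There is no real obstacle here; the only thing to be careful about is the bookkeeping in step one, namely confirming that the ``FPdim'' of a scalar multiple of $\tau_1$ is the corresponding scalar multiple of $\delta_1$, which is immediate because $A((2n-1)\tau_1)$ and $A(\tau_1)$ share the same eigenvectors with eigenvalues scaled by $2n-1$.
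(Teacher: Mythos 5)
Your proposal is correct and follows essentially the same path as the paper: compute that $\mathrm{FPdim}((2n-1)\tau_1) = (2n-1)\cdot 4^{1/(2n-1)}$ and reduce Galkin's bound to the elementary inequality $4^{1/(2n-1)} \geq \tfrac{2n}{2n-1}$. The only difference is in the last step, where the paper verifies this by observing that $f(x)=4^x-x-1$ is increasing with $f(0)=0$, whereas you rewrite it as $\bigl(1+\tfrac{1}{k}\bigr)^k \leq 4$ (with $k=2n-1$) and invoke the classical fact that this sequence increases to $e<4$; both are valid one-line finishes to the same reduction.
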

\begin{proof}
The smooth real function $f(x):=4^x-x-1$ satisfies $f(0)=0$ and $f'(x)=4^x \ln(4)-1>0$. Hence, $f\left(\frac{1}{2n-1}\right)>0$. That is, $4^{\frac{1}{2n-1}}>\frac{1}{2n-1}+1=\frac{2n}{2n-1}$. The result follows.
\end{proof}
 
\bibliography{Oconj}
\bibliographystyle{halpha}
\end{document}